 \numberwithin{equation}{section}
   \theoremstyle{plain}
\newtheorem{thm}{Theorem}
\newtheorem*{con* A}{Conjecture A}
\newtheorem*{con* B}{Conjecture B}
\newtheorem{lem}{Lemma}
\newtheorem{thm*}{Theorem}
\newtheorem*{con*}{Conjecture}
\newtheorem*{lem*}{Lemma}
\theoremstyle{definition}
\theoremstyle{definition}
\renewcommand{\geq}{\geqslant}
\renewcommand{\leq}{\leqslant}
\newcommand{\be}{\begin{equation}}
\newcommand{\ee}{\end{equation}}
\def\hf{{\textstyle{\frac12}}}
\def\a{\alpha}\def\b{\beta}
\def\d{{\,\rm d}}
\def\e{\varepsilon}
\def\G{\Gamma} \def\g{\gamma}
\def\z{\zeta}
\def\={\;=\;}
\def\le{\leqslant}
\def\zt{\zeta(\hf+it)}
\def\leq{\leqslant}
\def\geq{\geqslant}
\DeclareRobustCommand\widecheck[1]{{\mathpalette\@widecheck{#1}}}
\def\@widecheck#1#2{%
    \setbox\z@\hbox{\m@th$#1#2$}%
    \setbox\tw@\hbox{\m@th$#1%
       \widehat{%
          \vrule\@width\z@\@height\ht\z@
          \vrule\@height\z@\@width\wd\z@}$}%
    \dp\tw@-\ht\z@
    \@tempdima\ht\z@ \advance\@tempdima2\ht\tw@ \divide\@tempdima\thr@@
    \setbox\tw@\hbox{%
       \raise\@tempdima\hbox{\scalebox{1}[-1]{\lower\@tempdima\box
\tw@}}}%
    {\ooalign{\box\tw@ \cr \box\z@}}}
\begin{document}

 \title{On the distribution of  positive and negative values of Hardy's $Z$-function}
 \author{Steven M. Gonek}
 \author{Aleksandar Ivi\'c}

\email{gonek@math.rochester.edu}
\address{Department of Mathematics, University of Rochester, Rochester, NY 14627}

\email{aleksandar.ivic@rgf.bg.ac.rs, aivic\_2000@yahoo.com}
\address{Serbian Academy of Sciences and Arts, Knez Mihailova 35,
11000 Beograd, Serbia}

\thanks{Work of the first  author was partially supported by NSF grant DMS-1200582.}

\keywords{Riemann zeta-function, Hardy's function,  distribution of positive and negative values}

\begin{abstract}
We investigate the distribution of  positive and negative values of Hardy's function
$$
Z(t) := \zt{\chi(\hf+it)}^{-1/2}, \quad \z(s) = \chi(s)\z(1-s).
$$
In particular we prove that
$$
\mu\bigl(I_{+}(T,T)\bigr)  \;\gg T\; \qquad \hbox{and}\qquad  \mu\bigl(I_{-}(T, T)\bigr) \; \gg \; T,
$$
where $\mu(\cdot)$ denotes the Lebesgue measure and
\begin{align*}
 {  I}_+(T,H) &\;=\; \bigl\{T< t\le T+H\,:\, Z(t)>0\bigr\},\\
 {  I}_-(T,H) &\;=\; \bigl\{T< t\le T+H\,:\, Z(t)<0\bigr\}.
\end{align*}

\end{abstract}

\subjclass[2010]{Primary 11M06}

\setcounter{tocdepth}{2}  \maketitle 
  \maketitle

\section{Introduction and statement of results}

 Hardy's function $Z(t)$ is defined as
\be\label{defn: Z}
Z(t) := \zt{\chi(\hf+it)}^{-1/2},
\ee
where $\chi(s)$ ($s\in \mathbb C$) is the factor from the functional equation for $\z(s)$,
namely, $\z(s) = \chi(s)\z(1-s)$. Thus
$$
\chi(s) = 2^s\pi^{s-1}\sin(\hf \pi s)\G(1-s),\quad
\chi(s)\chi(1-s)=1.
$$
(See  the second author' s  monograph \cite{Iv4} for an extensive account of the $Z$-function.)
It follows  that
$$
\overline{\chi(\hf + it)} = \chi(\hf-it)= \chi^{-1}(\hf+it),
$$
so that $Z(t)\in\mathbb R$ when $t\in\mathbb R$, $Z(t) = Z(-t)$,
and $|Z(t)| =|\zt|$. Thus the zeros of $\z(s)$ on the ``critical line'' $\Re s =1/2$
correspond to the real zeros of $Z(t)$, which makes $Z(t)$ an invaluable tool
in the study of the zeros of the zeta-function on the critical line.

Our main interest here  is in the distribution of positive and negative values of $Z(t)$, a topic previously discussed in \cite{Iv3},  Chapter 11 of \cite{Iv4}, and in \cite{Iv5}.
If one looks at  graphs of $Z(t)$ in various $t$ ranges,  it is difficult to detect a  bias toward
positive or negative values.
Let $2\leq H \leq T $ and set
\begin{align*}
 {  I}_+(T,H) \;=\; \bigl\{T< t\le T+H\,:\, Z(t)>0\bigr\}
\end{align*}
and
\begin{align*}
{I}_-(T,H) \;=\;  \bigl\{T< t\le T+H\,:\, Z(t)<0\bigr\} .
\end{align*}
Also let $\mu(\cdot)$ denote Lebesgue  measure.
 Mathematica   calculations of  $I_+(T,H)$ and $I_{-}(T,H)$ for divers values
of $H$ and $T$ suggest the conjecture that the measure of   these  sets is approximately $ H/2$, even when $H$ is  quite small relative to $T$ (see  Tables 1 and 2 below).  The purpose of this paper is to lend theoretical support  to this conjecture by showing that $Z(t)$ takes
positive and negative values a positive proportion of the time on intervals that are not too short.

\begin{thm}\label{thm: uncond}
We have
\be\label{uncond lwr bds}
\mu\bigl(I_{+}(T,T)\bigr)  \;\gg T\; \qquad \hbox{and}\qquad  \mu\bigl(I_{-}(T, T)\bigr) \; \gg \; T .
\ee
\end{thm}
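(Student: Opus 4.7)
The plan is to combine three mean-value estimates for $Z(t)$ on $[T,2T]$.
First, I would invoke the classical second moment $\int_T^{2T} Z(t)^2\,dt = \int_T^{2T} |\zeta(\tfrac12+it)|^2\,dt \sim T\log T$.
Next, I would appeal to the much smaller upper bound $\int_T^{2T} Z(t)\,dt \ll T^{1/4+\eps}$, a classical estimate due to Ivi\'c, proved by shifting the line of integration and bounding exponential integrals of $\zeta$.
Finally, I would use Heath-Brown's unconditional lower bound for fractional moments of $\zeta$ to obtain $\int_T^{2T} |Z(t)|\,dt \gg T(\log T)^{1/4}$; a weaker version $\int|Z| \gg T/\sqrt{\log T}$ also follows from H\"older's inequality $\int Z^2 \le (\int|Z|)^{2/3}(\int Z^4)^{1/3}$ together with the standard fourth moment $\int Z^4 \sim c\,T(\log T)^4$.

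Since on $I_\pm$ the integrand has fixed sign, writing
$$
\int_T^{2T} Z\,dt \;=\; \int_{I_+} Z\,dt \,-\, \int_{I_-} |Z|\,dt, \qquad \int_T^{2T} |Z|\,dt \;=\; \int_{I_+} Z\,dt \,+\, \int_{I_-}|Z|\,dt,
$$
the smallness of the first quantity and the largeness of the third force
$$
\int_{I_+(T,T)} Z(t)\,dt \;\gg\; T \qquad\hbox{and}\qquad \int_{I_-(T,T)} |Z(t)|\,dt \;\gg\; T.
$$
Applying Cauchy-Schwarz on each set and majorizing by the global second moment then gives
$$
\mu\bigl(I_\pm(T,T)\bigr) \;\ge\; \frac{\bigl(\int_{I_\pm}|Z|\,dt\bigr)^2}{\int_{I_\pm} Z(t)^2\,dt} \;\gg\; \frac{T^2}{T\log T} \;=\; \frac{T}{\log T}.
$$

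The hard part will be closing the residual factor of $\log T$. A purely H\"older-based argument using the known moments of $\zeta(\tfrac12+it)$ is tight at $\mu \gg T/(\log T)^{1/4}$ even assuming the conjectured moment asymptotics $\int|\zeta|^{2k} \sim c_k T (\log T)^{k^2}$, so the bound $\mu \gg T$ must use something beyond the moments of $Z$ alone. Two natural routes suggest themselves. First, replace the crude bound $\int_{I_\pm} Z^2 \le \int_T^{2T} Z^2$ by excising the $o(T)$-measure set on which $|Z(t)|$ is atypically large (controlled via the fourth moment $\int Z^4 \ll T(\log T)^4$), so that on the typical subset $\int_{I_\pm} Z^2$ is of smaller order than $T\log T$. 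Second, exploit the structural identity $\mathrm{sgn}\,Z(t) = (-1)^{k(t)}$, where $k(t) = \theta(t)/\pi + S(t) \in \ZZ$, together with the rapid variation $\theta'(t) \sim \tfrac12\log t$ and the typical smallness $S(t) = O(\sqrt{\log\log T})$, to argue via equidistribution of $\theta(t)/\pi \pmod 2$ that both sign sets occupy a positive density of $[T,2T]$. I expect the technical execution of one of these structural refinements to be the main obstacle.
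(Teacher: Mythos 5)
Your skeleton --- split $[T,2T]$ by the sign of $Z$, use that the signed first moment is small while the absolute first moment is $\gg T$, then apply Cauchy--Schwarz against a second moment --- is exactly the skeleton of the paper's proof, and your diagnosis is also correct: run with the bare moments of $Z$ you only reach $\mu(I_\pm)\gg T/\sqrt{\log T}$, because $\int_T^{2T}Z^2\,\d t\sim T\log T$. But the one idea that actually closes the gap is missing from your proposal, and neither of your two suggested repairs is it. The paper inserts Selberg's mollifier into every integral: with $\zeta(s)^{-1/2}=\sum\a_\nu\nu^{-s}$, $\b_\nu=\a_\nu(1-\log\nu/\log X)$, $B_X(s)=\sum_{\nu\le X}\b_\nu\nu^{-s}$ and $X=T^\theta$ for small $\theta$, one proves (i) $\int_T^{2T}Z\,|B_X|^2\,\d t=o(T)$ by a contour shift, (ii) $\int_T^{2T}|Z|\,|B_X|^2\,\d t\ge|\int\z(\hf+it)B_X(\hf+it)^2\,\d t|=T+o(T)$ because $\z B_X^2$ is close to $1$, and crucially (iii) $\int_T^{2T}Z^2\,|B_X|^4\,\d t\ll T$ (extracted from Lemma 15 of Selberg's proof that a positive proportion of zeros lie on the critical line). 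Since $|B_X|^2$ is nonnegative, your sign decomposition goes through verbatim for the weighted integrals, and Cauchy--Schwarz now meets a second moment of size $T$ rather than $T\log T$. The mollifier is precisely the device that damps the large values of $|\z|$ responsible for the extra $\log T$, while (ii) certifies that it does not also destroy the first-moment mass.

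Your two proposed repairs both founder on exactly the point the mollifier handles. For the excision route: the bulk of $\int Z^2\sim T\log T$ is carried (on the lognormal heuristic) by a set of measure about $T/\log T$ on which $|Z|$ is of size $\log T$; to bring the restricted second moment down to $O(T)$ you must excise a set of measure $\gtrsim T/\log T$, and nothing in the moment data you list prevents that set from carrying a positive proportion of $\int_{I_+}|Z|\gg T$ --- controlling this trade-off is tantamount to proving estimates of the strength of (ii) and (iii), i.e.\ to building the mollifier. For the sign-structure route: a relation of the shape $\mathrm{sgn}\,Z(t)=(-1)^{N(t)}$ presupposes that all zeros up to height $t$ lie on the critical line with odd multiplicities, so it is not available unconditionally; and even granting RH, converting equidistribution of $\t(t)/\pi$ into a lower bound for $\mu(I_\pm)$ requires gap information about consecutive ordinates, which is essentially the paper's \emph{conditional} Theorem~\ref{thm: cond 1} (proved via the pair correlation conjecture), not a path to the unconditional Theorem~\ref{thm: uncond}.
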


Our method would also allow us to prove that  $\mu(I_{\pm}(T, H))\gg H$ for $H$ somewhat smaller than $T$.
Moreover, with more effort we could replace the $\gg$  symbols by  explicit inequalities.
However, a heuristic argument suggests that the values  we would obtain for the  constants,
 even using the best currently available mean value estimates, would  be rather small,
 so we have  not bothered to calculate them. Note also that
 it follows from \eqref{uncond lwr bds} that $\mu (I_{\pm}(0,T) ) \gg T$.

By a  different argument we can prove  a conditional result
with reasonably good   constants.

\begin{thm}\label{thm: cond 1}
Assume the Riemann hypothesis and Montgomery's pair correlation conjecture are true.
Then for all $T$ sufficiently large we have
\be\label{cond lwr bds 1}
\mu\bigr(I_{+}(0,T)\bigl)  \geq .32909\,T \qquad \hbox{and}\qquad  \mu\bigl(I_{-}(0, T)\bigr)   \geq .32909\,T.
\ee
\end{thm}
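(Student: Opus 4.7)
\emph{Plan.}

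First I would reduce the problem using $\mu(I_+(0,T)) + \mu(I_-(0,T)) = T + O(1)$ (the zero set of $Z$ has Lebesgue measure zero), which yields
\[
\mu\bigl(I_\pm(0,T)\bigr) \;=\; \frac{T}{2} \;\pm\; \frac12\int_0^T \mathrm{sgn}\bigl(Z(t)\bigr)\,dt \;+\; O(1).
\]
Hence Theorem \ref{thm: cond 1} reduces to proving the uniform upper bound
\[
\biggl|\int_0^T \mathrm{sgn}\bigl(Z(t)\bigr)\,dt\biggr| \;\leq\; (1-2\cdot 0.32909)\,T \;=\; 0.34182\,T
\]
for all sufficiently large $T$.

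Next I would rewrite the signed integral as a sum over zeros. Writing $\gamma_1<\gamma_2<\cdots$ for the zeros of $Z(t)$ on $(0,\infty)$ (all real under RH) and $\varepsilon_n=\pm 1$ for the sign of $Z$ on $(\gamma_n,\gamma_{n+1})$, one has
\[
\int_0^T \mathrm{sgn}\bigl(Z(t)\bigr)\,dt \;=\; \sum_{n\,:\,\gamma_{n+1}\leq T}\varepsilon_n\,(\gamma_{n+1}-\gamma_n) + O(1).
\]
Under the pair correlation conjecture, Montgomery's original argument shows that at least $2/3$ of the zeros of $\zeta$ in $[0,T]$ are simple, so the signs $\varepsilon_n$ alternate on all but a set of indices of density at most $1/3$.

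I would then bound the alternating sum by a second-moment argument. Squaring and expanding, the diagonal contribution $\sum_n(\gamma_{n+1}-\gamma_n)^2$ is controlled by PCC via the behaviour of $1-(\sin\pi u/\pi u)^2$ near the origin, while the off-diagonal sum involves products of gap lengths over pairs of distinct zeros, whose asymptotic size is governed by Montgomery's explicit pair correlation formula. Combined with the RH-conditional bound $\int_0^T Z(t)\,dt\ll T^{1/4}(\log T)^{A}$ (from the Riemann--Siegel expansion), which forces cancellation at the level of zero-weighted sums, this machinery yields an effective linear-in-$T$ upper bound on $|\int_0^T \mathrm{sgn}(Z(t))\,dt|$.

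The main obstacle will be extracting the precise numerical constant $0.32909$. Heuristically, random-matrix predictions suggest that the alternating sum should be only of order $\sqrt{T/\log T}$, leading to $\mu(I_\pm(0,T))\sim T/2$. However, the rigorous quantitative bound supplied by PCC alone is considerably weaker, requiring careful balancing of the contributions from small-gap pairs (where PCC offers tight control through the double zero of $1-(\sin\pi u/\pi u)^2$ at the origin) against larger gaps and non-simple zeros. The stated constant $0.32909$ emerges from optimizing the resulting one-parameter inequality.
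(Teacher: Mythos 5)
Your reduction to bounding $\bigl|\int_0^T \mathrm{sgn}(Z(t))\,dt\bigr|$ is fine, but the core of your argument --- controlling the alternating sum $\sum_n \varepsilon_n(\gamma_{n+1}-\gamma_n)$ by a second-moment computation --- does not work, for two reasons. First, the pair correlation conjecture as stated in \eqref{PC} gives, for fixed $\alpha<\beta$, the count of pairs with normalized gap in $[\alpha,\beta]$; it bounds the number of \emph{small} gaps but gives no upper bound on the contribution of large gaps, so the ``diagonal'' term $\sum_n(\gamma_{n+1}-\gamma_n)^2$ is not controlled by PCC. Second, even granting such control, a Cauchy--Schwarz/second-moment bound cannot produce a constant below the trivial one: writing $\delta_n=(\gamma_{n+1}-\gamma_n)\log T/(2\pi)$, one has $\sum_n\delta_n^2/N(T)\geq\bigl(\sum_n\delta_n/N(T)\bigr)^2=1+o(1)$, so the resulting bound on the alternating sum is at least of size $T$, which is weaker than the trivial bound $\bigl|\int_0^T\mathrm{sgn}(Z)\bigr|\leq T$ and far from the needed $0.34182\,T$. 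To do better you would need genuine cancellation from the alternating signs, i.e.\ information about correlations between \emph{consecutive} gap lengths, which PCC does not supply. The appeal to $\int_0^T Z(t)\,dt\ll T^{1/4+\e}$ does not repair this: smallness of $\int Z$ says nothing about $\int\mathrm{sgn}(Z)$, since $Z$ can be hugely positive on a long set and compensatingly negative on a short one.

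The idea you are missing is the one the paper is built on: if $\gamma$ is a simple zero with $Z'(\gamma)>0$ and $\gamma^*$ is the next ordinate, then $Z>0$ on the \emph{entire} interval $(\gamma,\gamma^*)$, so
$$
\mu\bigl(I_+(0,T)\bigr)\;\geq\;\sum_{\gamma\in\mathcal S_+(T)}(\gamma^*-\gamma),
$$
where $\mathcal S_+(T)$ is the set of ordinates with $Z'(\gamma)>0$; by alternation of signs at simple zeros, $|\mathcal S_\pm(T)|\sim\frac12N(T)$. This converts the problem into a \emph{lower} bound for a sum of gaps over half the zeros, which is exactly what PCC is suited for: the number of gaps $\leq 2\pi\alpha/\log T$ is at most $\bigl(f(\alpha)+o(1)\bigr)N(T)$ with $f(\alpha)=\int_0^\alpha[1-(\sin\pi u/\pi u)^2]\,du$, hence at least $\bigl(\tfrac12-f(\alpha)+o(1)\bigr)N(T)$ of the $\gamma\in\mathcal S_+(T)$ have gap exceeding $2\pi\alpha/\log T$. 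Integrating this count over $\alpha\in[0,A]$ recovers the gap sum, and optimizing over $A$ (the maximum occurs near $A\approx .952$) yields the constant $.32909$. Only small gaps need to be controlled, which is why PCC suffices here while it fails for your second moment. Incidentally, under the full PCC almost \emph{all} zeros are simple (your ``at least $2/3$'' is the weaker bound coming from Montgomery's partial knowledge of the form factor), and this full strength is used to get $|\mathcal S_\pm(T)|\sim\frac12 N(T)$.
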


The well-known Riemann hypothesis is the statement that all complex zeros of $\z(s)$ have real parts
equal to 1/2, and for a formulation of Montgomery's pair correlation conjecture, see \cite{Mon} and
\eqref{PC}.

It is worth noting that the answer to the corresponding  question for
$\log|Z(t)| =\log|\zeta(\frac12+it)|$, that is, how often $\log|Z(t)|$ is positive and how often it is negative,  is known.  For
Selberg~\cite{Sel 1} (also see   Tsang~\cite{Ts}) has shown that \
 $ \log|\zeta(\frac12+it)|/(\frac12 \log\log t)^{1/2}$ \
is normally distributed with mean zero.
Thus, the measure of the  set of $t\in[T,2T]$ for which $\log|Z(t)|$ is either positive or
negative is $\sim T/2$ as $T\to\infty.
$\footnote{From Tsang's version of the result one can deduce that
 the measure of the set of $t\in[T,T+H]$ for which $\log|Z(t)|$
 is either positive or negative is $\sim H/2$, where $T^{1/2+\e}\leq H\leq T$ and $0<\e\leq 1/2$.}

\begin{table}[h]\label{table: dyadic}
\centering 
\begin{tabular}{c c} 
\hline\hline 
$T$ & $\mu(I_+(T, T))/(\frac12 T)$ \\ [0.5ex] 
\hline\hline
100 & 0.943850
\\ 
200 & 0.987534 \\
500 &  0.963277 \\
1000& 0.981253 \\
5,000 & 0.986981  \\
10,000 & 0.990367 \\
50,000 &  0.968667     \\[1ex] 
\hline 
\end{tabular}
 \caption{Ratios of the measures of sets in dyadic intervals where $Z(t) > 0$ to the conjectured values.}
\end{table}
\vskip.15in
 \begin{table}[h]\label{table: length$100$}
\centering 
\begin{tabular}{c c} 
\hline\hline 
$T$ & $\mu(I_+(T,  100))/50$ \\ [0.5ex] 
\hline\hline 
100 & 0.943850
\\ 
200 & 0.989211 \\
500 &  0.967649 \\
1000& 0.876483 \\
5,000 & 1.04117  \\
10,000 & 0.967802 \\
100,000 & 1.05694\\
1,000,000 & 0.959324\\
10,000,000 & 1.00084 \\
100,000,000 & 1.00168 \\
[1ex] 
\hline 
\end{tabular}
 \caption{Ratios of the measures of sets in intervals of length $100$ where $Z(t)>0$ to the conjectured value.}
\end{table}
\vskip.15in



 \section{Lemmas  for the Proof of Theorem~\ref{thm: uncond}}

In this section we set down the lemmas necessary  for the proof of
Theorem~\ref{thm: uncond}.

Define the arithmetic function $\a_\nu$ by
  $$
 \frac{1}{\sqrt{\zeta(s)}}=\sum_{\nu=1}^\infty \a_\nu \nu^{-s} \qquad (\Re s >1).
 $$
For $1\leq \nu\leq X$ let
$$
\b_\nu=\a_\nu\Big(1-\frac{\log \nu}{\log  X}\Big)
$$
and set
 $$
B_X(s)=\sum_{\nu\leq X} \b_\nu \nu^{-s}.
 $$
In his famous proof that a positive proportion of the zeros of the zeta-function are on the critical line,
Selberg~\cite{Sel} used  $|B_X(\hf+it)|^2$ to mollify (smooth) $Z(t)$.
This function  serves the same purpose  for us here.

\begin{lem}\label{lem: little o mean}
Let   $X=T^\theta$ with $0<\theta<1/4$.
Then
\be\label{o mean}
\int_T^{2T}  Z(t) |B_X(\hf+it)|^2 \d t\; =\; o(T) \qquad(T\to\infty) .
\ee
\end{lem}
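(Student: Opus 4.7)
The plan is to compute $J := \int_T^{2T} Z(t)|B_X(\hf+it)|^2 \d t$ by inserting the Riemann--Siegel approximate functional equation for $Z(t)$ and treating the resulting exponential integrals by the method of stationary phase (in the spirit of Atkinson's dissection).

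First I would expand
$$|B_X(\hf+it)|^2 \;=\; \sum_{m,n\le X}\frac{\b_m\b_n}{\sqrt{mn}}\Bigl(\frac{n}{m}\Bigr)^{it}$$
and substitute
$$Z(t) \;=\; 2\sum_{k\le\sqrt{t/(2\pi)}}\frac{\cos\bigl(\vartheta(t)-t\log k\bigr)}{\sqrt{k}}+O(t^{-1/4}),$$
where $\vartheta(t)=\arg\chi(\hf+it)^{-1/2}\sim(t/2)\log(t/(2\pi))-t/2-\pi/8$, to convert $J$ into a triple sum
$$J \;=\; \sum_{m,n\le X}\frac{\b_m\b_n}{\sqrt{mn}}\sum_{k}\frac{1}{\sqrt k}\bigl(I_k^+(m,n)+I_k^-(m,n)\bigr)+O(T^{3/4}\log^CT),$$
in which $I_k^{\pm}$ are exponential integrals over suitable subintervals of $[T,2T]$ with phases $\vartheta(t)-t\log(mk/n)$ and $-\vartheta(t)+t\log(nk/m)$ respectively.

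Next, I would evaluate each $I_k^{\pm}$ by stationary phase. The saddle of $I_k^+$ lies at $t_0=2\pi(mk/n)^2$ and that of $I_k^-$ at $t_0=2\pi(nk/m)^2$, and this saddle sits in $[T,2T]$ exactly when $mk/n$ (resp.\ $nk/m$) is of size $\asymp\sqrt T$. For triples $(m,n,k)$ whose saddle is bounded away from $[T,2T]$, repeated integration by parts gives $|I_k^{\pm}|\ll 1$, and summing these bounds via the divisor-type estimate $|\b_\nu|\ll\nu^\e$ yields a negligible total contribution. For the saddle triples, stationary phase produces $I_k^{\pm}\sim 2\sqrt{\pi t_0}\,e^{i(t_0/2+c)}$, of magnitude $\asymp\sqrt T$ per triple.

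The key observation is that the \emph{trivial} bound on the saddle-triple sum already suffices. For each pair $(m,n)$ the set of $k$ giving a saddle in range forms an interval of length $\ll n\sqrt T/m$, so
$$\sqrt T\sum_{m,n\le X}\frac{|\b_m\b_n|}{\sqrt{mn}}\sum_{k\sim n\sqrt T/m}\frac{1}{\sqrt k} \;\ll\; T^{3/4}\sum_{m,n\le X}\frac{|\b_m\b_n|}{m} \;\ll\; T^{3/4}X^{1+\e} \;=\; T^{3/4+\theta+\e},$$
which is $o(T)$ precisely because $\theta<1/4$; no cancellation among the $\b$-coefficients is required. The main obstacle is the careful bookkeeping in the stationary phase step: one must handle the endpoints of $[T,2T]$ where the saddle enters or leaves the interval, treat the boundary case where the Riemann--Siegel cutoff $k\le\sqrt{t/(2\pi)}$ coincides with the saddle $k$-value, and control the intermediate ``near-saddle'' regime where $|\phi'_\pm|$ is small but the saddle is not formally in $[T,2T]$. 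The constant $\theta<1/4$ is exactly the threshold at which the trivial saddle bound stays below $T$.
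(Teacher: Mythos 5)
Your proposal is essentially correct, but it takes a genuinely different route from the paper. The paper never invokes the Riemann--Siegel formula: it writes the integral as $\frac1i\int_{1/2+iT}^{1/2+2iT}\chi(s)^{-1/2}\zeta(s)B_X(s)B_X(1-s)\,\d s$ and shifts the contour to $\Re s=c=1+1/\log T$, where $\zeta(s)$ has an absolutely convergent Dirichlet series; the only oscillation left is that of $\chi(s)^{-1/2}\asymp T^{1/4}e^{(it/2)\log(t\mu^2/2\pi e n^2\nu^2)}$, and a single application of the second derivative test gives $\ll T^{3/4}$ per exponential integral, hence $\ll T^{3/4}X\log^2T$ after trivially summing the mollifier coefficients. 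Your critical-line computation reaches the same bound $T^{3/4+\theta+\e}$ and the same threshold $\theta<1/4$ by the same underlying mechanism (square-root cancellation in the $\vartheta$-phase against a trivial sum over $\b_m\b_n$), so the analytic content is identical; the difference is that your route carries the bookkeeping you yourself flag --- the $t$-dependent cutoff $k\le\sqrt{t/(2\pi)}$, saddles entering and leaving $[T,2T]$, and the near-saddle regime where the first derivative test degenerates --- all of which the contour shift sidesteps, since on the $c$-line there is no truncation and the second derivative bound applies uniformly whether or not the stationary point $t=2\pi n^2\nu^2/\mu^2$ lies in the range. What your approach buys in exchange is more refined information: carried out fully, the stationary-phase evaluation would identify the actual main contributions from the saddle triples rather than merely bounding them, though for the lemma as stated only the upper bound is needed and the paper's argument is the shorter path to it.
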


\begin{proof}
By \eqref{defn: Z} we can write the integral in question as
\be\label{chi zeta BB int}
\frac1i \int_{\frac12+iT}^{\frac12+2iT}  \chi(s)^{-1/2} \zeta(s) B_X(s) B_X(1-s)\d s.
\ee
By Cauchy's theorem we may replace  the segment of integration
$[\frac12+iT, \,\frac12+2iT]$
  by the other three sides of the rectangle with vertices
  $\frac12+iT, c+iT,\,c+2iT$, and $\frac12+2iT$, where
  $c=1+1/\log T$,   traversed in that  order.
It is not difficult to see that the coefficients of $B_X(s)$ satisfy
 $ |\beta_\nu|\leq 1$ (since $\alpha_\nu$ is multiplicative and $0 \le 1 - \log\nu/\log X \le1$).
Thus for $\sigma\geq -1$  we have
\be\label{B bd}
B_X(s) \;\ll\; \max (X^{1-\sigma}, \log X).
\ee
Moreover, for $\sigma  \geq \hf, \,  t\geq 2$,
\be\label{zeta bd}
\zeta(\sigma+it) \;\ll\;  (t^{(1-\sigma)/3}+1)\log t,
\ee
which is the standard convexity bound for $\z(s)$ and follows from $\zt \ll t^{1/6}\log t$
and $\z(1+it) \ll \log t$.
Also for $-1\leq \sigma\leq 2, \ t\geq 2$, by Stirling's formula for the gamma-function, we have
\be\label{chi asymp}
\chi(s) = \Big(\frac{2\pi}{t}\Big)^{\sigma+it-1/2}{e}^{i(t+\pi/4)}
 \Big(1+O\Bigl(\frac{1}{t}\Bigr)\Big).
\ee
The contribution of the horizontal sides  of the rectangle, $[\frac12+iT, c+iT]$ and
$[\frac12+2iT, c+2iT]$, is therefore
\begin{align*}
  \ll \int_{1/2}^{c} \max(X^{1-\sigma}, \log X) X^\sigma T^{(\sigma-1/2)/2}
 (T^{(1-\sigma)/3}  + 1 ) \log T   \d\sigma
\ll XT^{1/4}\log T.
\end{align*}
On the right-hand  side of the rectangle the series for $\z(s)$ is absolutely convergent.
Therefore, by using \eqref{chi asymp}, we see that the integral over this side  equals
\be\notag
\int_T^{2T} \bigg(\sum_{n=1}^\infty n^{-c-it}\bigg)
\bigg(\sum_{\nu\le X}\b_\nu \nu^{-c-it} \bigg)
\bigg(\sum_{\mu\le X}\b_\mu \mu^{c-1+it} \bigg)
\left(\frac{t}{2\pi}\right)^{ (c- {1}/{2} +it)/2}e^{-i(t+\pi/4)/2}
\Big(1+O\Bigl(\frac{1}{t}\Bigr)\Big)\d t.
\ee
Using
 $$
\Bigl|\sum_{n=1}^\infty  n^{-c-it }\Bigr| \le
\sum_{n=1}^\infty  n^{-c } = \z\left(1 + \frac{1}{\log T}\right) \ll \log T
$$
together with \eqref{B bd} and \eqref{zeta bd}, it is seen that
  the $O$-term contributes $O( T^{1/4} X \log^2 T).$
The remaining expression is
\begin{align*}
&
e^{-\pi i/8}\sum_{n=1}^\infty \sum_{\nu  \le X} \sum_{\mu\le X}
 \frac{ \b_\nu \b_\mu  \mu^{c-1}}{n^{c } \nu^{c }}
\int_T^{2T}
 \left(\frac{t}{2\pi}\right)^{ (c-1/2)/2}e^{i(t/2)\log (t \mu^2/2\pi en^2\nu^2)}
 \d t.
\end{align*}
 By the second derivative bound for exponential integrals
 (see  Lemma 2.2 of \cite{Iv1} or Lemma 4.5 of \cite{Tit})  the integral is
  $ \ll  T^{3/4}.$  Therefore the entire expression  is
$$
\ll   T^{3/4 } \sum_{n=1}^\infty  n^{-c } \sum_{\nu \le X} \nu^{-c } \sum_{\mu \le X}
   \mu^{c-1}  \ll T^{3/4 }X \log^2T.
$$
Combining our estimates, we find that the integral in \eqref{chi zeta BB int}
is
$$
\ll XT^{1/4}\log T + T^{1/4} X \log^2 T+T^{3/4 }X \log^2T \ll T^{3/4 }X \log^2T .
$$
Thus, if we take $X=T^\theta$ with $\theta < 1/4$, \eqref{o mean} follows.

\end{proof}

\begin{lem}\label{lem: moll lower bd}
Let   $X=T^\theta$ with $0<\theta<1/2$.
Then
\be\label{asymp}
\int_T^{2T}  |Z(t)| |B_X(\hf+it)|^2\d t \;\geq\;  T + o(T).
\ee
\end{lem}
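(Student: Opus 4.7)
The key structural observation is that $|Z(t)|=|\z(\hf+it)|$ and, by the identity $|w|^2=|w^2|$ for any $w\in\CC$, also $|B_X(\hf+it)|^2=|B_X(\hf+it)^2|$. Consequently the integral on the left of \eqref{asymp} equals $\int_T^{2T}\bigl|\z(\hf+it)B_X(\hf+it)^2\bigr|\d t$. My plan is to apply the trivial inequality $\int|f|\ge|\int f|$ and reduce the lemma to showing that
\[
I \;:=\; \int_T^{2T}\z(\hf+it)B_X(\hf+it)^2\d t \;=\; T + o(T),
\]
since then $|I|\ge T+o(T)$, which is exactly \eqref{asymp}.

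To evaluate $I$ I would mimic the contour-shift strategy of Lemma~\ref{lem: little o mean}, but with the simpler integrand $\z(s)B_X(s)^2$---no factor $\chi(s)^{-1/2}$ is present. Writing $iI=\int_{1/2+iT}^{1/2+2iT}\z(s)B_X(s)^2\d s$ and shifting the contour to $\Re s = c := 1+1/\log T$, I note that the pole of $\z$ at $s=1$ lies outside the rectangle (because $T\ge 1$, so $t=0$ is not in $[T,2T]$), and Cauchy's theorem produces no residue. On the two horizontal sides, the convexity bound \eqref{zeta bd} together with \eqref{B bd} shows the contribution is $\ll T^{1/6+\theta}(\log T)^3$, which is $o(T)$ in the stated range $\theta<1/2$ (in fact, for any $\theta<5/6$); the passage from $\theta<1/4$ in Lemma~\ref{lem: little o mean} to $\theta<1/2$ here is precisely what the absence of the $\chi^{-1/2}$ factor buys.

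On the vertical side $\Re s = c$ both Dirichlet series converge absolutely. Writing $B_X(s)^2=\sum_{m\le X^2}d_m m^{-s}$ with $d_m=\sum_{\nu_1\nu_2=m}\b_{\nu_1}\b_{\nu_2}$ (so $d_1=\b_1^2=\a_1^2=1$), expanding termwise gives
\[
\int_T^{2T}\z(c+it)B_X(c+it)^2\d t \;=\; \sum_{n\ge 1}\sum_{m\le X^2}\frac{d_m}{(nm)^c}\int_T^{2T}(nm)^{-it}\d t.
\]
The diagonal term $n=m=1$ contributes exactly $d_1 T = T$, the expected main term. For the off-diagonal pairs one has the standard bound $\int_T^{2T}(nm)^{-it}\d t\ll 1/\log(nm)$, and since $|d_m|\le\tau(m)$ a routine estimate bounds the off-diagonal sum by $O(\log^2 T)$. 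Combining these pieces gives $I=T+o(T)$ and completes the proof. The main obstacle is really just bookkeeping: verifying that both the off-diagonal Dirichlet double sum and the horizontal contour segments are absorbed into the $o(T)$ error, a task made easy by the removal of the $\chi^{-1/2}$ factor.
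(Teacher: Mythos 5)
Your argument is correct, and after the shared opening move it takes a genuinely different route from the paper. Both proofs begin identically, with the reduction
$$\int_T^{2T}|Z(t)||B_X(\hf+it)|^2\d t=\int_T^{2T}\bigl|\z(\hf+it)B_X(\hf+it)^2\bigr|\d t\ge\Bigl|\int_T^{2T}\z(\hf+it)B_X(\hf+it)^2\d t\Bigr|,$$
but they then diverge. The paper stays on the critical line: it inserts the truncated approximation $\zt=\sum_{n\le T}n^{-1/2-it}+O(T^{-1/2})$ valid for $T\le t\le 2T$, multiplies against $B_X(\hf+it)^2=\sum_{m\le X^2}b(m)m^{-1/2-it}$, extracts the main term $T$ from the single diagonal term $mn=1$, and bounds the off-diagonal contribution by $\ll T^{1/2}X\log X$ using $\int_T^{2T}(mn)^{-it}\d t\ll 1/\log(mn)$ together with the mean value theorem for Dirichlet polynomials; it is this $T^{1/2}X$ error that dictates the hypothesis $\theta<1/2$. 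You instead shift the contour to $\Re s=1+1/\log T$ (legitimately: the pole of $\z$ at $s=1$ is not enclosed, since the rectangle sits above height $T$), harvest the same diagonal term $T$ from the absolutely convergent product of Dirichlet series there, and control the horizontal sides by convexity, getting a total error $\ll T^{1/6+\theta}\log^3T$. Two small quibbles, neither of which affects validity: your off-diagonal bound on the $c$-line should be $O(\log^3T)$ rather than $O(\log^2T)$ (three factors of $\z(c)\ll\log T$, one from the $n$-sum and two from $\sum_m\tau(m)m^{-c}$), and the horizontal-side bound also carries the extra logarithms. The trade-off: your version avoids the approximate functional equation entirely and works for any $\theta<5/6$, strictly more than the lemma claims; the paper's on-line computation is more elementary (no contour shift, no convexity estimate) and its narrower range is harmless, since the proof of Theorem~\ref{thm: uncond} is constrained to $\theta<1/100$ by Lemma~\ref{lem: Selberg} in any case.
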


\begin{proof}
We begin by noting that
\be\notag
\int_T^{2T}  |Z(t)| |B_X(\hf+it)|^2\d t
=\int_T^{2T}  |\z(\hf+it)| |B_X(\hf+it)|^2 \d t
\geq \bigg|  \int_T^{2T}  \z(\hf+it) B_X(\hf+it)^2 \d t \bigg|.
\ee
By the  well known approximate formula (see Chapter 1 of \cite{Iv1})
\begin{align*}
\zt = &\sum_{n\le T}n^{-1/2-it} - \frac{T^{1/2-it}}{ 1/2-it} +O(T^{-1/2})
 \qquad \qquad (|t|\leq 2T) ,
\end{align*}
and for  the range $T\leq t\leq 2T$ this becomes
\begin{align*}
\zt = &\sum_{n\le T}n^{-1/2-it}  +O(T^{-1/2}) .
\end{align*}
Hence,
$$
\int_T^{2T}  |Z(t)| |B_X(\hf+it)|^2\d t
\geq
\bigg|  \int_T^{2T}  \bigg(\sum_{n\le T}n^{-1/2-it}  +O(T^{-1/2}) \bigg)
B_X(\hf+it)^2 \d t \bigg|.     $$
By the mean value theorem for Dirichlet polynomials and since
$|\b_\nu|\leq 1$, the $O$-term contributes
$$
\ll  T^{-1/2} \sum_{\nu\leq X}\frac{\b_\nu^2}{\nu}(T+ \nu)
\ll  T^{1/2}\log X +T^{-1/2}X \ll T^{1/2}\log T
$$
for $X=T^\theta$ with $\theta<1/2$.
To treat the other term let
$$
B_X(s)^2 = \sum_{m\leq X^2} b(m) m^{-s},
$$
where $b(m)=\sum_{d|m}\b_d \b_{m/d}$. Note that $b(1) =1$ and
$|b(m)|\leq d(m)$, the divisor function of $m$. Thus, we find that
\be\notag
\begin{split}
\int_T^{2T}  \bigg(\sum_{n\le T}n^{-1/2-it}   \bigg) B_X(\hf+it)^2 \d t
=\,&\int_T^{2T}  \bigg(\sum_{n\le T}n^{-1/2-it}   \bigg)
 \bigg(\sum_{m\le X^2} b(m)m^{-1/2-it}   \bigg) \d t \\
 =\,&T+      \sum_{\substack{n\leq T\\  m\le X^2\\ mn>1     }}   \frac{b(m)}{(mn)^{1/2}}
 \int_T^{2T}    (mn)^{-it}  \d t  \\
  =\,&T+    O\Bigg(  \sum_{\substack{n\leq T\\  m\le X^2\\ mn>1 }}
  \frac{|b(m)|}{(mn)^{1/2}\log mn}\Bigg).
\end{split}
\ee
The $O$-term is
$$
\ll\; \sum_{n\leq T} \frac{1}{n^{1/2}}\sum_{m\leq X^2}\frac{d(m)}{m^{1/2}}
\;\ll\; T^{1/2} X\log X.
$$
Combining our estimates, we find that
$$
\int_T^{2T}  |Z(t)| |B_X(\hf+it)|^2\d t
\geq T+O(T^{1/2} X\log X).
$$
The result now follows provided that $0<\theta<1/2$.
\end{proof}

\begin{lem}\label{lem: Selberg}
Let   $X=T^\theta$ with $0<\theta<1/100$.
Then
\be\label{est: Sel bd}
\int_T^{2T } Z(t)^2   |B_X(\tfrac12+it)|^4 \d t \;\ll\; T.
\ee
\end{lem}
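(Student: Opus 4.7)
Since $|Z(t)| = |\zeta(\hf+it)|$, the integral in \eqref{est: Sel bd} equals
$$
\int_T^{2T} \bigl|\zeta(\hf+it)\, B_X(\hf+it)^2\bigr|^2 \, dt.
$$
The reason to hope for the bound $\ll T$ is that $B_X(s)^2$ mollifies $\zeta(s)$: because $B_X(s)$ is the truncation at length $X$ of the Dirichlet series for $1/\sqrt{\zeta(s)}$, one has $\zeta(s)B_X(s)^2 \approx 1$ on the critical line, so its mean square on $[T,2T]$ ought to be close to $T$.

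Following the template of Lemma~\ref{lem: moll lower bd}, I would replace $\zeta(\hf+it)$ by its Dirichlet polynomial $\sum_{n\le T} n^{-1/2-it}$ (with error $O(T^{-1/2})$) and multiply by $B_X(\hf+it)^2 = \sum_{m\le X^2} b(m)\,m^{-1/2-it}$, where $|b(m)|\le d(m)$. This reduces the integrand, modulo a small error that is $o(T)$ after integration (by Cauchy--Schwarz against the mean-value bound $\int_T^{2T}|B_X|^4\,dt \ll T(\log T)^{O(1)}$), to $|P(\hf+it)|^2$ with
$$
P(s) = \sum_{k\le TX^2} \gamma(k)\, k^{-s}, \qquad \gamma(k) = \sum_{\substack{nm=k\\ n\le T,\, m\le X^2}} b(m).
$$
Expanding $|P|^2$, integrating, and separating diagonal from off-diagonal gives a leading $T\sum_k |\gamma(k)|^2/k$ plus an off-diagonal term controlled by the Hilbert-type inequality of Montgomery--Vaughan. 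The mollification feeds into the diagonal via an explicit computation of $\gamma(k)$: for $k\le X$, using $(\alpha\ast\alpha)(n)=\mu(n)$ and expanding $(1-\log d/\log X)(1-\log(k/d)/\log X)$ together with $\sum_{d\mid k}\mu(d)\log d = -\Lambda(k)$, one obtains
$$
\gamma(k) \;=\; \delta_{k,1} \;+\; \frac{\Lambda(k)}{\log X} \;+\; O\!\left(\frac{d(k)}{(\log X)^{2}}\right),
$$
so $\sum_{1<k\le X}|\gamma(k)|^2/k \ll 1$; for $X<k\le TX^2$ the restrictions $k/T\le m\le X^2$ shorten the convolution, and analogous divisor-function estimates suffice.

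The main obstacle is the bookkeeping among the error pieces: the $O(T^{-1/2})$ from the approximate formula, the Montgomery--Vaughan tail $\sum_k|\gamma(k)|^2$, and the partial cancellation regimes $X<k\le X^2$ and $X^2<k\le TX^2$. Each brings polynomial or logarithmic losses against the $O(T)$ target, and the hypothesis $\theta<1/100$ is exactly what provides enough slack: it keeps $TX^2 = T^{1+2\theta}$ sufficiently close to $T$ that all pieces fit comfortably under $T$, yielding \eqref{est: Sel bd}.
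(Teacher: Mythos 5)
Your reduction to $\int_T^{2T}\bigl|\z(\hf+it)B_X(\hf+it)^2\bigr|^2\d t$ and the replacement of $\z$ by $\sum_{n\le T}n^{-1/2-it}$ are fine as far as they go, but the key step --- applying the Montgomery--Vaughan mean value theorem to the product polynomial $P(s)=\sum_{k\le TX^2}\gamma(k)k^{-s}$ --- has a genuine gap. Montgomery--Vaughan gives
$$
\int_T^{2T}|P(\hf+it)|^2\d t \;=\; T\sum_{k\le TX^2}\frac{|\gamma(k)|^2}{k}\;+\;O\Bigl(\sum_{k\le TX^2}|\gamma(k)|^2\Bigr),
$$
and since the polynomial has length $TX^2=T^{1+2\theta}>T$, the error term is not under control. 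For $T<k\le TX^2$ one has $\gamma(k)=\sum_{m\mid k,\;k/T\le m\le X^2}b(m)$, an \emph{incomplete} divisor sum in which the mollification identity $(1\ast b)(k)\approx\delta_{k,1}$ is unavailable; generically this is a bounded number of terms of size $\asymp 1$ (recall $b(m)$ behaves like $\mu(m)$ times a smooth weight, so it does not decay), supported on $\asymp TX^2$ integers. Thus $\sum_{T<k\le TX^2}|\gamma(k)|^2$ is of order $TX^2$ up to logarithms, which exceeds the target $T$ by the power $T^{2\theta}$ no matter how small $\theta>0$ is. Your closing remark that $\theta<1/100$ ``keeps $TX^2$ sufficiently close to $T$'' is therefore not right: no positive $\theta$ makes this error term $O(T)$, and the threshold $1/100$ in the statement actually comes from Selberg's parameter choice $\xi=T^{(2a-1)/20}$, $1/2<a<3/5$, not from any slack of this kind. (The diagonal analysis of $\gamma(k)$ for $k\le X$ via $\alpha\ast\alpha=\mu$ is essentially correct, but it is not where the difficulty lies.)

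To repair the argument one must avoid a polynomial of length exceeding $T$: either use the approximate functional equation with two sums of length $\sqrt{t/2\pi}$ (so the products have length $T^{1/2+2\theta}\ll T$ and the Montgomery--Vaughan error is acceptable, at the cost of handling the $\chi$-cross terms by stationary phase), or carry out the full twisted-second-moment analysis of $\int_0^T|\z(\hf+it)|^2(h/k)^{it}\d t$. This second route is in effect what the paper does: its proof of Lemma~\ref{lem: Selberg} is not a direct computation at all, but a citation of Lemma 15 of Selberg~\cite{Sel}, where the integral $\int_T^{T+U}X(t+h)X(t+k)|\eta_h(t)\eta_k(t)|^2\d t$ is evaluated as $\sqrt{2\pi}\,UK(h-k)+O(T^{1/2}\xi^7)$ over some fifteen pages, with $K(0)=O(1)$; the paper's only work is translating notation, specializing to $h=k=0$ by continuity of $K$, and summing over subintervals. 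So your route is genuinely different from the paper's, but as written it does not close.
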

\begin{proof}
This estimate is implicit in the   proof of Lemma 15 of Selberg~\cite{Sel}. His notation   differs from ours, so we shall briefly indicate the differences and describe how to obtain \eqref{est: Sel bd} from  his argument.

Selberg writes
$\eta(t)$ for our $B_X(\frac12+it)$ and $\eta_h(t)$ for   $B_X(\frac12+i(t+h))$.
Moreover, he takes   $\xi =T^{(2a-1)/20}$ with  $1/2<a<3/5$ for the length of $\eta(t)$, whereas we write $X=T^\theta$ for the length of $B_X(s)$.
Instead of $Z(t)$, Selberg works with
$$
 X(t) = - \Big(\frac\pi2\Big)^{1/4} Z(t) \Big(1+O\Big(\frac1t\Big)\Big)
 $$
 for $t$ positive  (see equations (2.1)--(2.3) on  p. 92 of~\cite{Sel}).
 In the course of the proof of   Lemma 15, Selberg estimates   the integral
 $$
  \int_T^{T+U} X(t+h) X(t+k) |\eta_h(t) \eta_k(t)|^2\d t
 $$
 for $0\leq h, k \leq H$, where $H\leq 1/\sqrt{\log \xi}$
 and $T^a\leq U  \leq T^{3/5}$
  (see ~\cite{Sel}, p. 100,
 just below equation (4.3)).
If we take this with $h=k=0$,   we see that
\be\label{Sel mean-our mean}
  \int_T^{T+U} X(t)^2   |\eta (t)|^4\d t
  =\Big(\frac\pi2\Big)^{1/2} \Big(1+O\Big(\frac1T\Big)\Big)  \int_T^{T+U} Z(t)^2
   |B_X  (\tfrac12 +i t)|^4\d t.
 \ee
Now, Selberg~\cite{Sel} (see the  bottom of p. 108) shows that
\be\label{Sel formula}
  \int_T^{T+U} X(t+h) X(t+k) |\eta_h(t) \eta_k(t)|^2\d t
  =\sqrt{2\pi} U K(h-k) +O(T^{1/2} \xi^7),
\ee
 where
 $$
 K(u) =\Re \bigg( \tau^{iu} \sum_{\nu_1, \nu_2, \nu_3, \nu_4 <\xi }
 \frac{\b_{\nu_1} \b_{\nu_2} \b_{\nu_3} \b_{\nu_4}}{\nu_1\nu_2\nu_3 \nu_4}
 \frac{\kappa^{1+iu}}{(\nu_2\nu_3)^{iu}} \sum_{n<\tau \kappa/\nu_2\nu_4} n^{-1-iu}\bigg)
 $$
with $\tau=\sqrt{ T/2\pi}$ and $\kappa=(\nu_1\nu_3, \nu_2 \nu_4)$.
Over the course of the next five pages Selberg proves  that
$K(u)=O(1)$ for $0<u\leq 1/\log \xi$  (see near the bottom of p. 113).
We need this with $u=0$, but that also follows because, as is apparent from its definition,  $K(u)$ is continuous at  $u=0$.
(Selberg excludes $u=0$ because there are poles in an  expression
he  uses to approximate a truncation of the zeta function; see the displayed equation
just after (4.23).)

Taking $1/2<a<3/5$ corresponds to taking $X=\xi =T^\theta$ with $0<\theta=(2a-1)/20< 1/100$.
Then, if $U\gg T^{1/2+7\theta}$, we find that  $U$ satisfies $T^a\leq U\leq T^{3/5}$, as required, and
from \eqref{Sel mean-our mean} and \eqref{Sel formula} we have
\begin{align*}
\int_T^{T+U} Z(t)^2   |B_X(\tfrac12+it)|^4 \d t \;\ll\; U.
\end{align*}
 Splitting the interval $[T, 2T]$  into subintervals of length $U$
 and adding the results, we finally obtain \eqref{est: Sel bd}.
\end{proof}

\section{Proof of Theorem~\ref{thm: uncond}}

We  prove  only  the first estimate in~\eqref{uncond lwr bds} as the proof of the
second is similar.

Clearly we have, setting $I_\pm(T) = I_\pm(T,T)$ for shortness,
\be\label{Z |B^2|}
\int_T^{2T}  Z(t) |B_X(\hf+it)|^2\d t
=\int_{I_+(T)}  Z(t) |B_X(\hf+it)|^2\d t+\int_{I_-(T)}   Z(t) |B_X(\hf+it)|^2\d t,
\ee
and
\be\label{|Z| |B^2|}
\int_T^{2T}  |Z(t)| |B_X(\hf+it)|^2\d t
=\int_{I_+(T)}  Z(t) |B_X(\hf+it)|^2\d t - \int_{I_-(T)}   Z(t) |B_X(\hf+it)|^2\d t.
\ee
Adding  \eqref{Z |B^2|} and \eqref{|Z| |B^2|}, we deduce that
\be\label{add}
\int_{I_+(T)}  Z(t) |B_X(\hf+it)|^2\d t = \frac{1}{2}\left(\int_T^{2T}  Z(t) |B_X(\hf+it)|^2\d t
+ \int_T^{2T}  |Z(t)| |B_X(\hf+it)|^2\d t\right).
\ee
By Lemma~\ref{lem: little o mean}
\be\label{asymp}
\int_T^{2T}  Z(t) |B_X(\hf+it)|^2\d t = o(T)\qquad(T\to\infty),
\ee
and by Lemma~\ref{lem: moll lower bd}
\be\label{small mean}
\int_T^{2T}  |Z(t)| |B_X(\hf+it)|^2\d t   \geq  T + o(T)  \qquad(T\to\infty).
\ee
Thus, by  \eqref{Z |B^2|}--\eqref{small mean} we obtain
\be\label{I_+ lwr bd}
\int_{I_+(T)}  Z(t) |B_X(\hf+it)|^2\d t\geq \hf T + o(T) \qquad(T\to\infty).
\ee
By  the Cauchy-Schwarz  inequality we then deduce that
\be\label{}
\hf T + o(T)  \le \mu\bigl(I_+(T,T)\bigr)^{1/2}{\left(\int_T^{2T}|Z(t)|^2 |B_X(\hf+it)|^4\d t\right)}^{1/2}.
\ee
The first bound in \eqref{uncond lwr bds}   now follows   from (3.7) and  the estimate
$$
\int_T^{2T}|Z(t)|^2|B_X(\hf+it)|^4\d t\;\ll\; T
$$
in Lemma~\ref{lem: Selberg}.

\section{Proof of Theorem~\ref{thm: cond 1}}

In this section we assume both the Riemann hypothesis and Montgomery's pair correlation conjecture.
To state the latter, let $\g, \g'$ denote arbitrary ordinates of zeros of the zeta-function and let
$$
N(T)  = \sum_{0<\g\leq T} 1.
 $$
As is well known (see e.g., \cite{Iv1} or \cite{Tit}),
\be\label{N(T)}
N(T)\sim \frac{T}{2\pi} \log \frac{T}{2\pi} \qquad\quad (T\to\infty).
\ee
Montgomery's pair correlation conjecture~\cite{Mon} asserts
that, if $\a, \b$ are fixed real numbers with $\a<\b$, then
\be\label{PC}
  \sum_{\substack{0<\g,\g' \leq T\\  {2\pi \alpha}/{\log T} \leq \gamma' -\gamma \leq  {2\pi \beta}/{\log T} }}  1\sim
\bigg(\int_\alpha^\beta \biggl[1-\bigg(\frac{\sin\pi u}{\pi u}\bigg)^2\biggr] \d  u +\delta(\a,\b) \bigg) N(T)
\ee
as $T\to \infty$. Here   $\delta(\a, \b) =1$ if $0\in[\a,\b]$ and
$\delta(\a, \b) =0$ otherwise.
We  define
$$\mathcal S_+(T)=  \Bigl\{0<\gamma\leq T :  Z'(\g)>0  \Bigr\}, $$
$$\mathcal S_{-}(T)= \Bigl\{0<\gamma\leq T :  Z'(\g)<0  \Bigr\} ,$$
and  define, with $|\mathcal A|$ denoting the cardinality of the set $\mathcal A$,
$$
N_{+}(T) =|\mathcal S_+(T)|,  \quad
   N_{-}(T) =|\mathcal S_-(T)| .
$$
It follows from \eqref{PC} that almost all   zeros $\rho=\hf+i\g$ of the zeta-function are simple, that is, the number of them with ordinates in $(0, T]$ is $\sim N(T)$. Thus,  consecutive ordinates almost always alternate between the  two sets $\mathcal S_{+}(T)$ and $\mathcal S_{-}(T)$ and we have
\be\label{N pl,min asymp}
N_{+}(T)  \sim N_{-}(T)\sim \frac12 N(T)\qquad(T\to\infty).
\ee

Suppose now that  $\g$ is the ordinate of a simple zero
and that $\g^*$ is the next ordinate greater than $\g$.
Setting
$$
f(\alpha)\;:=\;\int_0^\alpha \biggl[1-\bigg(\frac{\sin\pi u}{\pi u}\bigg)^2\biggr] \d u
$$
with  $\a>0$,
we see from \eqref{PC} that
$$
 \sum_{\substack{0<\g,\g^* \leq T\\ \g^* -\g \leq \frac{2\pi \alpha}{\log T}}}  1
\;   \leq  \;\sum_{\substack{0<\g,\g' \leq T\\ 0< \gamma' -\gamma \leq \frac{2\pi \alpha}{\log T}} } 1
 \; \sim \; f(\alpha)\, N(T)\qquad(T\to\infty).
$$
Hence, the number of simple zeros $\rho=\hf+i\g$ with $0<\g\leq T$
and\, $\g^* -\g>2\pi\alpha/\log T$
\,   is greater than or equal to
$$ \big(1-f(\a)+o(1) \big) N(T)\qquad(T\to\infty).$$
By \eqref{N pl,min asymp} the number of these $\g$ that are  in $\mathcal S_+(T)$ (similarly, $\mathcal S_-(T)$) is therefore
$$
\geq \big(1-f(\a)+o(1) \big) N(T)-\tfrac12N(T) =\big(\tfrac12-f(\a)+o(1) \big)
N(T)\qquad(T\to\infty).
$$
Thus, if we define
$$
N_{+}(\a, T):=\sum_{\substack{\g\in \mathcal S_+(T) \\ \g^{*}-\g>2\pi \a/\log T}} 1,
$$
and $N_{-}(\a, T)$ similarly, then
\be\label{N_{pm}}
N_{\pm}(\a, T) \geq \big(\tfrac12-f(\a)+o(1) \big) N(T)\qquad(T\to\infty).
\ee

For $T$ large, let $B= B(T)>1$ be such that every gap  $\g^*-\g$
between consecutive ordinates of zeros with $\g\in (0, T]$ is
less than  $2\pi B/\log T$.
Then we have
\be\notag
\begin{split}
\int_{0}^{B} N_{+}(\a, T) \; \d \a
= & \int_{0}^{B}  \Bigg(\sum_{\substack{
\g\in \mathcal S_+(T) \\ \g^{*}-\g>2\pi \a/\log T}} 1  \Bigg)\d \a \\
=& \sum_{\substack{ \g\in \mathcal S_+(T)}}   \int_0^{((\g^{*}-\g) \log T)/2\pi } 1\;  \d \a   \\
=& \,\frac{\log T}{2\pi} \sum_{\substack{ \g\in \mathcal S_+(T)}}  (\g^{*}-\g)   \\
\le\, & \frac{\log T}{2\pi}\; \mu\big(I_{+}(0, T)\big).
\end{split}
\ee
Now $N_{+}(\a, T)$ is nonnegative,  so for  any   $A\in[0,B]$,
\be\notag
\mu\bigr(I_{+}(0,T)\bigl) \geq
 \frac{2\pi }{\log T}\; \int_{0}^{A} N_{+}(\a, T)  \d \a .
\ee
By  \eqref{N(T)} and \eqref{N_{pm}} we therefore see that
\be\notag
\mu\bigr(I_{+}(0,T)\bigl) \geq
T\; \int_{0}^{A} \big(\tfrac12-f(\a) \big)  \d \a +o(T)  \qquad(T\to\infty).
\ee
Using  Mathematica, we find  that the right-hand side  attains a maximum value slightly
  greater than $.32909\,T$ when $A\approx .952$. The same argument works
  {\it{mutatis mutandis}} for $\mu\bigr(I_{-}(0,T)\bigl)$, so
the proof of Theorem~\ref{thm: cond 1} is complete.

\section{Acknowledgement}
The authors wish to express their sincere gratitude to Dr. Christopher
Hughes for several invaluable discussions and for his  assistance with the computations in Tables 1 and 2. We   also  thank   Siegfred Baluyot  and   Fan Ge for their careful reading of the manuscript and comments. In particular, we are most grateful to them for a  suggestion that led to an improvement of the constant in Theorem~\ref{thm: cond 1}.

\vfill
\eject

\vskip1cm


\begin{thebibliography}{99}

\vskip1cm

\bibitem{Iv1} A. Ivi\'c, The Riemann zeta-function, John Wiley \&
Sons, New York 1985 (reissue,  Dover, Mineola, New York, 2003).




\bibitem {Iv3} A. Ivi\'c, On some problems involving Hardy's function,
Central European Journal of Mathematics
{\bf8(6)}(2010), 1029-1040.


\bibitem{Iv4} A. Ivi\'c,  The theory of Hardy's $Z$-function,
Cambridge University Press, Cambridge, 2012, 245pp.



\bibitem {Iv5} A. Ivi\'c,
Hardy's function $Z(t)$ - results and problems,  to appear in
the Steklov Math. Inst. Proc. in honour of
the 125th anniversary of I.M. Vinogradov (2017).



\bibitem{Mon}  H.L. Montgomery, The pair correlation of zeros of the
zeta-function,  Proc. Symp. Pure Math. {\bf24}, AMS, Providence 1973,
181-193.





\bibitem{Sel} A. Selberg, On the zeros of Riemann's zeta-function,
Collected works, vol. 1, Springer Verlag, Berlin etc., pp.  85-141.

\bibitem{Sel 1} A. Selberg, Old and new conjectures and results about a class of
Dirichlet series, Collected works, vol. 2, Springer Verlag, Berlin etc., pp.  47-63.


\bibitem{Tit}  E.C. Titchmarsh, The theory of the Riemann
zeta-function (2nd edition),  Oxford University Press, Oxford, 1986.

\bibitem{Ts} K. M. Tsang, The distribution of the values of the Riemann zeta-function,
P.D. dissertation,  Princeton Univ., Princeton, NJ, 1984.  

\vskip2cm
\end{thebibliography}
\end{document}